\documentclass[preprint,12pt]{elsarticle1}



\addtolength{\topmargin}{-9mm}
\setlength{\oddsidemargin}{5mm}  
\setlength{\evensidemargin}{0mm}
\setlength{\textwidth}{15cm}
\setlength{\textheight}{21cm}
\usepackage{subfig}
\usepackage{graphicx}
\usepackage{caption,color}   
\usepackage{amssymb}
\usepackage{amsthm}
\usepackage{amsmath}
\usepackage{epic}
\usepackage{setspace}
\usepackage{float}
\usepackage{multirow}
\newtheorem{thm}{Theorem}[section]
\newtheorem{cor}[thm]{Corollary}

\newtheorem{lem}[thm]{Lemma}

\numberwithin{equation}{section}
\journal{}

\begin{document}
\begin{spacing}{1.15}
\begin{frontmatter}
\title{Extremal Zagreb indices of bicyclic hypergraphs}

\author{Hong Zhou}
\author{Changjiang Bu}\ead{buchangjiang@hrbeu.edu.cn}
\address{School of Mathematical Sciences, Harbin Engineering University, Harbin 150001, PR China}

\begin{abstract}

The Zagreb index of a hypergraph is defined as the sum of the squares of the degrees of its vertices.
A connected $k$-uniform hypergraph with $n$ vertices and $m$ edges is called bicyclic if $n=m(k-1)-1$.
In this paper, we determine the hypergraphs with the maximum and minimum Zagreb indices among all linear bicyclic uniform hypergraphs.

\end{abstract}

\begin{keyword}
bicyclic hypergraph, Zagreb index\\
\emph{MSC:}
05C65, 05C09
\end{keyword}
\end{frontmatter}

\section{Introduction}

Let $G=(V,E)$ be a simple graph with the vertex set $V(G)$ and the edge set $E(G)$.
The first Zagreb index $M_{1}$ of a graph $G$ is defined as
$$
M_{1}(G)=\sum\limits_{u\in V(G)}(d_{G}(u))^{2},
$$
where $d_{G}(u)$ is the degree of vertex $u$ in $G$. The first Zagreb index is also called Gutman index.

The first Zagreb index, first proposed by Gutman and Trinajsti{\'c} \cite{1972Graph}, is originated
from chemical studies on total $\pi$-electron energy of conjugated molecules.
The main properties of first Zagreb indices were summarized in \cite{30years,2004Graph}.
Deng \cite{Aunifiedapproach} characterized the graphs with the maximum and minimum first Zagreb indices among all bicyclic graphs.
Some results on extremal first Zagreb indices have been obtained in the literature:
see \biboptions{sort&compress}\cite{2004Graph,treedegree,treesdomination,treesdistancedomination} for trees, \cite{Uncyclicgraphswiththefirstthree,orderUncyclicgraphs} for unicyclic graphs,
\cite{Zagrebindicesquasiunicyclic}  for $k$-generalized quasi unicyclic graphs,
\cite{Zagrebindices} for triangle-free graphs, and \biboptions{sort&compress}\cite{Zagrebindicescut,Zagrebindicescutedges,Zagrebindicesclique,Zagrebindicesconnectivity,Zagrebindicespendent} for graphs with given parameters.



Hypergraphs find application in chemistry when modeling molecules or
chemical reactions involving multiple atoms bonding simultaneously.
Hypergraphs offer a more accurate depiction of certain
chemical scenarios, such as transition states in reactions, which involve
multiple atoms simultaneously changing their bonding configurations. The model for an organometallic
compound, where the hyperedges with two vertices represent covalent
bond and with more than two vertices represent delocalised polycentric
bonds, was studied in \cite{Applicationfhypergraph}.

For a hypergraph $\mathcal{H}$, $$\sum\limits_{u\in V(\mathcal{H})}(d_{\mathcal{H}}(u))^{2}$$ is called the Zagreb index of $\mathcal{H}$, denoted by $M(\mathcal{H})$, where $d_{\mathcal{H}}(u)$ is the degree of vertex $u$ in $\mathcal{H}$ \cite{Kau2020Energies}.
The hypergraphs with the maximum and minimum Zagreb indices were determined both for uniform hypertrees and for linear unicyclic uniform hypergraphs \cite{2309.16925}.
The bounds on the Zagreb indices of hypergraphs, weak bipartite hypergraphs, hypertrees, $k$-uniform
hypergraphs, $k$-uniform weak bipartite hypergraphs, and $k$-uniform hypertrees were given \cite{zagrebhy}.

In this paper, we characterize the hypergraphs with the maximum and minimum Zagreb indices among all linear bicyclic uniform hypergraphs.

\section{Preliminaries}

A hypergraph $\mathcal{H}$ is called \textit{$k$-uniform} if every edge of $\mathcal{H}$ contains exactly $k$ vertices.
A vertex of $\mathcal{H}$ is called a \textit{cored vertex} if it has degree one.
An edge $e$ of $\mathcal{H}$ is called a \textit{pendant edge} if it contains exactly $|e|-1$ cored vertices.
A cored vertex in a pendant edge is also called a \textit{pendant vertex}.
A hypergraph $\mathcal{H}$ is called \textit{linear} if any two edges intersect into at most one vertex.
The \textit{girth} of $\mathcal{H}$ is the minimum length of the hypercycles of $\mathcal{H}$.
All linear bicyclic $k$-uniform hypergraphs with $n$ vertices and $m$ edges consist of the following two types $\mathcal{B}_{n}^{k}$ and $\mathcal{C}_{n}^{k}$.

For three integers $p, q, l$ with $q\geq p\geq3$ and $l\geq 0$, let $C_{1} (resp., C_{2})$ be a $k$-uniform
 hypercycle of length $p$ (resp., $q$) and $P = (u_{0},e_{1},u_{1},\ldots,e_{l},u_{l})$ be a $k$-uniform hyperpath
of length $l$. Let $v_{1,1}\in V(C_{1}), v_{2,1}\in V(C_{2})$ be two arbitrary vertices with degree 1, and
let $v_{1,2}\in V(C_{1}), v_{2,2}\in V(C_{2})$ be two arbitrary vertices with degree 2. For $n'= (p+q+l)(k-1)-1$, let $B_{1,n',p,l,q}^{k}$
be the $n'$-vertex $k$-uniform bicyclic hypergraph obtained by
identifying $v_{1,2}$ with $u_{0}$, and identifying $v_{2,2}$ with $u_{l}$, let $B_{2,n',p,l,q}^{k}$
be the $n'$-vertex $k$-uniform
bicyclic hypergraph obtained by identifying $v_{1,2}$ with $u_{0}$, and identifying $v_{2,1}$ with $u_{l}$, and
let $B_{3,n',p,l,q}^{k}$
be the $n'$-vertex $k$-uniform bicyclic hypergraph obtained by identifying $v_{1,1}$ with $u_{0}$, and identifying $v_{2,1}$ with $u_{l}$.

For $n\geq n'$ and $i\in \{1, 2, 3\}$, let $\mathcal{B}_{i,n,p,l,q}^{k}$ be the set of $n$-vertex $m$-edge $k$-uniform bicyclic hypergraphs
each of which contains $B_{i,n',p,l,q}^{k}$ as a sub-hypergraph, where $m=\frac{n+1}{k-1}$. 
Let $\mathcal{B}_{n}^{k}=\bigcup_{i=1}^{3}\{\mathcal{B}_{i,n,p,l,q}^{k}~|~q\geq p\geq3,l\geq 0\}$.
Moreover, let $B_{1,n,p,0,q}^{k}(m-p-q)$ denote the $k$-uniform bicyclic hypergraph
obtained from $B_{1,n',p,0,q}^{k}$ by attaching $m-p-q$ pendant edges at the unique vertex with
degree 4, where $m=\frac{n+1}{k-1}$.

Let $P_{p}=(u_{1},e_{1},u_{2},\ldots,e_{p},u_{p+1}), P_{q}=(v_{1}, f_{1}, v_{2},\ldots, f_{q}, v_{q+1})$ and $P_{l}=(w_{1},g_{1},\\w_{2},\ldots,g_{l},w_{l+1})$ be three $k$-uniform hyperpaths, and suppose $(p+q+l)(k-1)-1=n'$. For three integers $p, q, l$ with $p=1,1<q\leq l$ and $1<p\leq q\leq l$, let $C_{1,n',p,q,l}^{k}$ be the $n'$-vertex $k$-uniform bicyclic hypergraph obtained from $P_{p}, P_{q}$ and $P_{l}$
by identifying three vertices $u_{1}, v_{1}, w_{1},$ and identifying three vertices $u_{p+1}, v_{q+1}, w_{l+1}$.
For $q>1, 1\leq p\leq q-1\leq l$ and $q=1, 1<p\leq l$, let $C_{2,n',p,q,l}^{k}$ be the $n'$-vertex $k$-uniform bicyclic hypergraph obtained from $P_{p}, P_{q}$ and
$P_{l}$ by identifying three vertices $u_{1}, v_{1}, w_{1}$, identifying $u_{p+1}$ with $v_{q+1}$, and identifying
$w_{l+1}$ with $v$, respectively, where $v\in f_{q}\setminus \{v_{q}, v_{q+1}\}$.
For $q>2, 1\leq p\leq q-2\leq l$ and $q=2,1\leq p\leq l$ and $q=1, k>3,1<p\leq l$, let $C_{3,n',p,q,l}^{k}$ be the $n'$-vertex $k$-uniform bicyclic hypergraph obtained from $P_{p}, P_{q}$ and
$P_{l}$ by identifying $u_{1}$ with $v_{1}$, identifying $u_{p+1}$ with $v_{q+1}$, identifying
$w_{1}$ with $v'$, and identifying $w_{l+1}$ with $v''$, respectively, where $v'\in f_{1}\setminus \{v_{1}, v_{2}\}$ and $v''\in f_{q}\setminus \{v_{q}, v_{q+1}\}$.
(in the special case $q=1$, we choose $v'\neq v''$).

For $n\geq n'$ and $i\in \{1, 2, 3\}$, let $\mathcal{C}_{i,n,p,q,l}^{k}$ be the set of $n$-vertex $m$-edge $k$-uniform bicyclic hypergraphs
each of which contains $C_{i,n',p,q,l}^{k}$ as a sub-hypergraph, where $m=\frac{n+1}{k-1}$.
Let $\mathcal{C}_{n}^{k}=\{\mathcal{C}_{1,n,p,q,l}^{k} ~|~ p=1,1<q\leq l \text{~or~} 1<p\leq q\leq l\} \bigcup \{\mathcal{C}_{2,n,p,q,l}^{k}~|~ q=1,1<p\leq l \text{~or~} q>1, 1\leq p\leq q-1\leq l\} \bigcup \{\mathcal{C}_{3,n,p,q,l}^{k}~|~ q>2, 1\leq p\leq q-2\leq l\text{~or~} q=2,1\leq p\leq l  \text {~or~}q=1, k>3,1<p\leq l\}$.
Moreover, for $i\in \{1, 2\}$, let $C_{i,n,p,q,l}^{k}(m-p-q-l)$  denote the $k$-uniform bicyclic hypergraph
obtained from $C_{i,n',p,q,l}^{k}$ by attaching $m-p-q-l$ pendant edges at the vertex with
degree 3, where $m=\frac{n+1}{k-1}$.


The following Lemma gives an operation of moving edges which changes the Zagreb indices.


\begin{lem}\label{sp277}
For $k\geq 3$, let $\mathcal{H}$ be a linear $k$-uniform hypergraph with $u,v\in V(\mathcal{H})$ and $d_{\mathcal{H}}(u)\geq 2$. Let $e_{1},\ldots,e_{t}$ be the edges incident with $u$, $v\notin e_{i}$ for each $i\in[t]$  and $d_{\mathcal{H}}(v)>d_{\mathcal{H}}(u)-t$.  Write $e'_{i}=(e_{i}\setminus \{u\})\bigcup\{v\}$ for each $i\in[t]$. Let $\mathcal{H}^{'}$ be the hypergraph with $V(\mathcal{H}^{'})=V(\mathcal{H})$ and $E(\mathcal{H}^{'})=(E(\mathcal{H})\setminus\{e_{1},\ldots,e_{t}\})\bigcup\{e'_{1},\ldots,e'_{t}\}$.
Then $\mathcal{H}^{'}$ is obtained from $\mathcal{H}$ by moving $t$ edges from $u$ to $v$ and $M(\mathcal{H}')>M(\mathcal{H})$.
\end{lem}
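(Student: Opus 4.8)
The plan is to prove the inequality by tracking which vertex degrees change under the edge-moving operation. Since $V(\mathcal{H}')=V(\mathcal{H})$ and $M(\cdot)=\sum_{w}d(w)^{2}$, it suffices to identify the vertices whose degree differs in $\mathcal{H}'$ and $\mathcal{H}$ and to compute the resulting change.

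First I would verify that the operation is well defined and that only $u$ and $v$ are affected. Because $v\notin e_{i}$, the set $e_{i}'=(e_{i}\setminus\{u\})\cup\{v\}$ has exactly $k$ elements, so $\mathcal{H}'$ is again $k$-uniform; moreover no $e_{i}'$ can equal an edge $f\in E(\mathcal{H})\setminus\{e_{1},\dots,e_{t}\}$, since $|e_{i}'\cap e_{i}|=k-1\geq 2$ would contradict linearity of $\mathcal{H}$ --- this is precisely where $k\geq 3$ and linearity are used, and it ensures $\mathcal{H}'$ has the same number of edges as $\mathcal{H}$. Now fix $w\in V(\mathcal{H})\setminus\{u,v\}$. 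The only change from $E(\mathcal{H})$ to $E(\mathcal{H}')$ is that $e_{1},\dots,e_{t}$ are replaced by $e_{1}',\dots,e_{t}'$, and for each $i$ one has $w\in e_{i}$ if and only if $w\in e_{i}'$ (deleting $u\neq w$ and inserting $v\neq w$ does not affect membership of $w$); hence $d_{\mathcal{H}'}(w)=d_{\mathcal{H}}(w)$. For $u$: it lies in each $e_{i}$ and in no $e_{i}'$, so $d_{\mathcal{H}'}(u)=d_{\mathcal{H}}(u)-t$. For $v$: it lies in no $e_{i}$ (by hypothesis) and in every $e_{i}'$, so $d_{\mathcal{H}'}(v)=d_{\mathcal{H}}(v)+t$.

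The computation then closes the argument:
$$M(\mathcal{H}')-M(\mathcal{H})=\bigl(d_{\mathcal{H}}(u)-t\bigr)^{2}+\bigl(d_{\mathcal{H}}(v)+t\bigr)^{2}-d_{\mathcal{H}}(u)^{2}-d_{\mathcal{H}}(v)^{2}=2t\bigl(d_{\mathcal{H}}(v)-d_{\mathcal{H}}(u)+t\bigr).$$
At least one edge is moved, so $t\geq 1$, and by hypothesis $d_{\mathcal{H}}(v)>d_{\mathcal{H}}(u)-t$, whence $d_{\mathcal{H}}(v)-d_{\mathcal{H}}(u)+t=d_{\mathcal{H}}(v)-(d_{\mathcal{H}}(u)-t)\geq 1$ since these quantities are integers. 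Therefore $M(\mathcal{H}')-M(\mathcal{H})\geq 2t\geq 2>0$.

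I do not anticipate a real obstacle: once the degree changes are pinned down, the inequality is a one-line algebraic identity. The only step requiring care is the degree bookkeeping in the middle paragraph --- in particular the hypothesis $v\notin e_{i}$ is essential, since otherwise $v$ would gain fewer than $t$ incidences and the identity would break, and linearity together with $k\geq 3$ is what keeps the replacement edges from colliding with existing ones. The hypothesis $d_{\mathcal{H}}(u)\geq 2$ is not needed for the inequality itself; it is present because the later extremal arguments apply the lemma only at vertices of degree at least two.
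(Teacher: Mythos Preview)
Your proof is correct and follows the same approach as the paper: both compute $M(\mathcal{H}')-M(\mathcal{H})$ by observing that only the degrees of $u$ and $v$ change and then expanding $(d_{\mathcal{H}}(v)+t)^{2}+(d_{\mathcal{H}}(u)-t)^{2}-d_{\mathcal{H}}(v)^{2}-d_{\mathcal{H}}(u)^{2}=2t(t+d_{\mathcal{H}}(v)-d_{\mathcal{H}}(u))>0$. The paper's version is terser, jumping straight to this identity, whereas you additionally spell out the degree bookkeeping and the role of linearity and $k\geq 3$.
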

\begin{proof}
By the definition of the Zagreb index, we have
\begin{align*}
M(\mathcal{H}')-M(\mathcal{H})&=d^{2}_{\mathcal{H}'}(v)+d^{2}_{\mathcal{H}'}(u)-d^{2}_{\mathcal{H}}(v)-d^{2}_{\mathcal{H}}(u)\\
&=(d_{\mathcal{H}}(v)+t)^{2}+(d_{\mathcal{H}}(u)-t)^{2}-d_{\mathcal{H}}^{2}(v)-d_{\mathcal{H}}^{2}(u)\\
&=2t(t+d_{\mathcal{H}}(v)-d_{\mathcal{H}}(u))>0.
\end{align*}
\end{proof}


\section{Main results}

In this section,
we give the hypergraphs with the maximum and minimum Zagreb indices among all linear bicyclic uniform hypergraphs.

The following Theorem gives the hypergraph with minimum Zagreb index among all linear bicyclic uniform hypergraphs.
\begin{thm}
In linear bicyclic $k$-uniform hypergraphs with $n$ vertices and $m$ edges, the hypergraph with maximum degree $2$ has minimum Zagreb index.
\end{thm}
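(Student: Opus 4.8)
The plan is to prove a sharp lower bound on $M(\mathcal{H})$ depending only on $n,m,k$, and then to observe that equality is attained exactly by the hypergraphs of maximum degree $2$. First I would record the two linear constraints on the degree sequence of any $\mathcal{H}$ in the class. Since $\mathcal{H}$ is $k$-uniform with $m$ edges, $\sum_{u\in V(\mathcal{H})}d_{\mathcal{H}}(u)=\sum_{e\in E(\mathcal{H})}|e|=mk$; and since a bicyclic hypergraph is by definition connected, $d_{\mathcal{H}}(u)\ge 1$ for every $u$. Setting $x_u=d_{\mathcal{H}}(u)-1\ge 0$, the identity $n=m(k-1)-1$ gives $\sum_u x_u=mk-n=m+1$, and
\[
M(\mathcal{H})=\sum_{u}(x_u+1)^2=\sum_{u}x_u^2+2\sum_{u}x_u+n=\sum_{u}x_u^2+2(m+1)+n .
\]

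Next, because each $x_u$ is a non-negative integer we have $x_u^2\ge x_u$, hence $\sum_u x_u^2\ge\sum_u x_u=m+1$, with equality if and only if every $x_u\in\{0,1\}$, i.e. if and only if $\Delta(\mathcal{H})\le 2$; equality cannot occur with $\Delta(\mathcal{H})=1$ because then $\sum_u x_u=0\neq m+1$. This yields
\[
M(\mathcal{H})\ \ge\ n+3(m+1)\ =\ m(k+2)+2
\]
for every linear bicyclic $k$-uniform hypergraph with $n$ vertices and $m$ edges, with equality exactly when $\mathcal{H}$ has maximum degree $2$. (Lemma~\ref{sp277} is not needed for this direction; alternatively one could iterate its edge-move in the degree-decreasing direction, but the convexity computation is cleaner.)

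Finally I would verify that the bound is attained inside the class, so the statement is not vacuous. For instance $B_{3,n',p,l,q}^{k}$ — two hypercycles joined by a hyperpath, each identification being of a degree-one vertex with a degree-one vertex — has maximum degree $2$, and one may grow it to $n$ vertices by attaching the required $m-(p+q+l)$ pendant edges one at a time, each at a vertex that currently has degree $1$; the result is linear, bicyclic, lies in $\mathcal{B}_{3,n,p,l,q}^{k}$, and still has maximum degree $2$. Any such hypergraph has exactly $m+1$ vertices of degree $2$ and $n-m-1$ of degree $1$, so $M=m(k+2)+2$, the claimed minimum. The only point needing care is this realizability step — checking that admissible parameters $p\ge 3$, $q\ge 3$, $l\ge 0$ can be chosen for every allowed $n$ and that pendant attachments never force a degree above $2$ — while the inequality itself is immediate from the convexity of $x\mapsto x^{2}$ on the integers.
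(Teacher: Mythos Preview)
Your argument is correct and is essentially the same as the paper's: the paper writes $t^{2}=(t-1)(t-2)+3t-2$ and uses $(t-1)(t-2)\ge 0$ for positive integers $t$, which is exactly your inequality $x_u^{2}\ge x_u$ after the shift $x_u=d_{\mathcal{H}}(u)-1$; both yield the same minimum $3km-2n=m(k+2)+2$, attained precisely when every degree lies in $\{1,2\}$. Your added realizability paragraph (exhibiting a $\Delta=2$ member of $\mathcal{B}_{3,n,p,l,q}^{k}$) is a small bonus that the paper leaves implicit.
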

\begin{proof}
Let $\mathcal{H}$ be a bicyclic $k$-uniform hypergraph with $n$ vertices and $m$ edges. Let $n_{t}$ be the number of vertices of $\mathcal{H}$  whose degree is equal to $t$, and $\Delta_{\mathcal{H}}$ be the maximum degree of $\mathcal{H}$. Then
\begin{equation*}\label{zsyzh1}
\sum\limits_{t=1}^{\Delta_{\mathcal{H}}}n_{t}=n,\sum\limits_{t=1}^{\Delta_{\mathcal{H}}}tn_{t}=km, \text{~and~} M(\mathcal{H})=\sum\limits_{t=1}^{\Delta_{\mathcal{H}}}t^{2}n_{t}.
\end{equation*}

By the above Equations, we have
$$
M(\mathcal{H})
=\sum\limits_{t=1}^{\Delta_{\mathcal{H}}}((t-1)(t-2)+3t-2)n_{t}
=\sum\limits_{t=1}^{\Delta_{\mathcal{H}}}(t-1)(t-2)n_{t}+3km-2n.\\
$$
Therefore, when $\Delta_{\mathcal{H}}=2$, $\mathcal{H}$ has minimum Zagreb index, and $M(\mathcal{H})=3km-2n$.

\end{proof}

The following Corollary follows immediately from the above Theorem.
\begin{cor}
In linear bicyclic $k$-uniform hypergraphs with $n$ vertices, $m$ edges and girth $g$, the hypergraph with maximum degree $2$ has minimum Zagreb index.
\end{cor}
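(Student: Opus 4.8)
The plan is to observe that the Corollary is essentially the Theorem restricted to a subclass of hypergraphs, so the same counting identity does the work. Concretely, fix $k$, $n$, $m$, and the girth $g$, and let $\mathcal{H}$ range over all linear bicyclic $k$-uniform hypergraphs with these parameters. First I would recall the degree identities used in the proof of the Theorem: with $n_t$ the number of vertices of degree $t$ and $\Delta_{\mathcal{H}}$ the maximum degree, we have $\sum_{t=1}^{\Delta_{\mathcal{H}}} n_t = n$ and $\sum_{t=1}^{\Delta_{\mathcal{H}}} t\, n_t = km$, and hence the same algebraic rearrangement gives
\begin{equation*}
M(\mathcal{H}) = \sum_{t=1}^{\Delta_{\mathcal{H}}} (t-1)(t-2)\, n_t + 3km - 2n .
\end{equation*}
Since $(t-1)(t-2)\ge 0$ for every integer $t\ge 1$, the sum on the right is nonnegative, and it vanishes precisely when $n_t = 0$ for all $t\ge 3$, i.e. when $\Delta_{\mathcal{H}}=2$. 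Thus among the hypergraphs under consideration, any one with maximum degree $2$ attains $M(\mathcal{H}) = 3km - 2n$, which is a lower bound for every member of the class.

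The one point that needs checking — and the only real content beyond quoting the Theorem — is that the class of linear bicyclic $k$-uniform hypergraphs with $n$ vertices, $m$ edges, and girth $g$ actually contains a hypergraph of maximum degree $2$. I would argue this by inspecting the standard families $\mathcal{B}_n^k$ and $\mathcal{C}_n^k$ described in Section 2: the "theta-type" configuration $C_{1,n',p,q,l}^{k}$ built from three internally disjoint hyperpaths sharing only their two endpoints has all internal vertices of degree $2$ and its two junction vertices of degree $3$; taking instead the configuration where two hypercycles are joined by a path (the $B$-type, or more simply two vertex-disjoint cycles linked by a connecting path) and choosing the attachment vertices to be degree-$1$ vertices of the cycles yields a bicyclic hypergraph whose only vertices of degree exceeding $2$ are the two path-endpoints, each of degree exactly... — here one must be slightly careful, since joining at cored vertices still raises their degree. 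The cleanest route is: a bicyclic hypergraph has exactly $n = m(k-1)-1$, equivalently two independent cycles, and the configuration consisting of two hypercycles sharing a single common vertex (a "figure-eight") has that shared vertex of degree $4$; to get maximum degree $2$ one instead uses a single closed hyperwalk structure, which does exist for the bicyclic count, and then the girth $g$ can be realized by choosing the shortest constituent cycle to have length $g$. I would spell this out by exhibiting one explicit member of $\mathcal{C}_{1,n,p,q,l}^{k}$ or a two-cycles-connected-by-a-path hypergraph with the cycle lengths and path length chosen so that $p+q+l$ matches $m$, the girth equals $g$, and no identification forces a vertex of degree $\ge 3$.

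The main obstacle, then, is not the inequality — that is an immediate corollary of the displayed identity, exactly as in the Theorem — but the existence/realizability step: one must confirm that for the given triple $(n,m,g)$ there is indeed a linear bicyclic $k$-uniform hypergraph with maximum degree $2$ and girth $g$, and that imposing the girth constraint does not conflict with having $\Delta_{\mathcal{H}}=2$. Once that hypergraph is named, the proof concludes in one line: by the identity $M(\mathcal{H}) \ge 3km - 2n$ for all $\mathcal{H}$ in the class, with equality for the exhibited maximum-degree-$2$ hypergraph, the latter has minimum Zagreb index. I would therefore structure the write-up as: (i) quote the identity from the Theorem's proof (it does not use bicyclicity beyond the edge count, so it transfers verbatim), (ii) note nonnegativity of $(t-1)(t-2)$ and the equality condition, (iii) exhibit a suitable maximum-degree-$2$ representative respecting the girth $g$, and (iv) conclude.
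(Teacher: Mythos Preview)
Your core argument is exactly what the paper does: the identity
\[
M(\mathcal{H}) = \sum_{t\ge 1}(t-1)(t-2)\,n_t + 3km - 2n
\]
together with the nonnegativity of $(t-1)(t-2)$ gives $M(\mathcal{H})\ge 3km-2n$, with equality precisely when $\Delta_{\mathcal{H}}\le 2$. The paper offers no separate proof of the Corollary at all; it simply remarks that it ``follows immediately from the above Theorem,'' since the identity is indifferent to the girth constraint.

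Where you diverge is in worrying about existence of a maximum-degree-$2$ representative with the prescribed girth. Read the statement carefully: both the Theorem and the Corollary are phrased conditionally --- ``the hypergraph with maximum degree $2$ has minimum Zagreb index'' --- not ``there exists a hypergraph with maximum degree $2$ which realizes the minimum.'' Under that reading no existence argument is required, and your steps (i)--(ii) already finish the proof. Your step (iii) is therefore superfluous for the Corollary as stated, and as you yourself noticed, your attempts there do not converge: the $C_{1}$-type theta configuration has two degree-$3$ vertices, the figure-eight $B_{1,n',p,0,q}^{k}$ has a degree-$4$ vertex, and the two-cycles-plus-path constructions all force a vertex of degree at least $3$. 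If you did want to name a max-degree-$2$ bicyclic hypergraph, the correct family is $C_{3,n',p,q,l}^{k}$ from Section~2 (the paper itself notes later, in the proof of Theorem~3.4, that these have maximum degree $2$): there the four identifications each merge two degree-$1$ vertices into a single degree-$2$ vertex, and no vertex exceeds degree $2$.

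So: drop the existence discussion, keep (i)--(ii), and you match the paper verbatim.
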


The following Theorem
gives the hypergraph with maximum Zagreb index among all hypergraphs in $\bigcup_{i=1}^{3}\{\mathcal{B}_{i,n,g,l,q}^{k}~|~q\geq g,l\geq 0\}$, and
gives the hypergraph with maximum Zagreb index among all hypergraphs in $\mathcal{B}_{n}^{k}$.
\begin{thm}
For $k\geq3$ and $m\geq 2g$, $B_{1,n,g,0,g}^{k}(m-2g)$ is the hypergraph with maximum Zagreb index among all hypergraphs in $\bigcup_{i=1}^{3}\{\mathcal{B}_{i,n,g,l,q}^{k}~|~q\geq g,l\geq 0\}$.
And for $m\geq6$,
$B_{1,n,3,0,3}^{k}(m-6)$ is the hypergraph with maximum Zagreb index among all  hypergraphs in $\mathcal{B}_{n}^{k}$.
\end{thm}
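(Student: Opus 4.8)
The plan is to reduce the two claims to repeated applications of Lemma \ref{sp277} (the edge-moving lemma), since that lemma is the only tool available and it strictly increases the Zagreb index. The overall strategy: start with an arbitrary hypergraph $\mathcal H$ in the relevant family, and show that by a finite sequence of edge-moves one can transform it into $B_{1,n,g,0,g}^{k}(m-2g)$ (resp.\ $B_{1,n,3,0,3}^{k}(m-6)$), each move strictly increasing $M$; hence the target hypergraph is the unique maximizer. The key is to organize the moves so that after each one the hypergraph stays linear, bicyclic, and inside the family being considered.

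First I would treat the first statement (fixed girth $g$, so $p=g$). Take $\mathcal H\in\mathcal{B}_{i,n,g,l,q}^{k}$ for some $i,l,q$ with $q\ge g$. Step one: collapse the connecting path, i.e.\ reduce to $l=0$. If $l\ge1$, the path $P$ has an endpoint identified with a degree-2 vertex of one cycle; I would move all edges of $\mathcal H$ incident with the "far" cycle's attachment vertex (and everything hanging off it) along $P$ toward the other attachment vertex — concretely, move the edges incident with $u_l$ onto $u_0$, shortening $P$ by one. Because the vertex we move onto has strictly larger degree after absorbing the cycle's structure, Lemma \ref{sp277} applies; one must only check the hypotheses $d(u)\ge2$, $v\notin e_i$, and $d(v)>d(u)-t$, which hold since $u_0$ ends up with degree $\ge3$. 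Iterating drives $l$ to $0$ and simultaneously merges the two $B_{i}$-types into the single vertex of degree $4$ configuration, i.e.\ lands us in $\mathcal B_{1,n,g,0,g}^{k}$-type with the two cycles sharing one vertex. Step two: shrink the larger cycle from length $q$ to length $g$ — again a path-shortening move, pulling the edges around $C_2$ toward the degree-4 vertex. Step three: gather all remaining $m-2g$ "extra" edges (whatever tree-like or cyclic excess is attached) into pendant edges at the degree-4 vertex; each such consolidation is an instance of Lemma \ref{sp277} with $v$ the degree-4 vertex, which by then has the maximum degree, so the hypothesis $d(v)>d(u)-t$ is automatic. The endpoint of all this is exactly $B_{1,n,g,0,g}^{k}(m-2g)$.

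For the second statement I would first invoke the first statement to restrict attention to the hypergraphs $B_{1,n,g,0,g}^{k}(m-2g)$ over all admissible $g\ge3$ (for the $\mathcal B_n^k$ family; one should also argue the $\mathcal C_n^k$ family is dominated, either by a parallel sequence of moves into some $B_1$-type hypergraph, or by noting each $\mathcal C$-configuration is obtained from a $\mathcal B$-configuration by an edge-move in the wrong direction). Then it remains a one-parameter comparison: show $M\big(B_{1,n,g,0,g}^{k}(m-2g)\big)$ is strictly decreasing in $g$, equivalently that $B_{1,n,3,0,3}^{k}(m-6)$ beats $B_{1,n,g,0,g}^{k}(m-2g)$ for every $g\ge4$. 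This again follows from Lemma \ref{sp277}: given the $2g$-cycle configuration, move one of the cycle edges off a degree-2 cycle vertex onto the degree-4 central vertex (turning it into a pendant edge and shortening a cycle), which strictly increases $M$ and, after repetition, converts girth $g$ into girth $3$. The degree count is favorable throughout because the central vertex already carries the maximum degree.

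The main obstacle I anticipate is the bookkeeping in Step one/Step two of the first statement: verifying that the hypotheses of Lemma \ref{sp277} genuinely hold at \emph{every} intermediate move — in particular the strict inequality $d_{\mathcal H}(v)>d_{\mathcal H}(u)-t$ can fail if one picks the move direction carelessly (e.g.\ moving onto a degree-2 path vertex rather than toward the high-degree hub), and one must also confirm that linearity is preserved, i.e.\ the relocated edge $e_i'=(e_i\setminus\{u\})\cup\{v\}$ does not now share two vertices with some existing edge. Handling the several sub-cases in the definitions of $\mathcal C_{1},\mathcal C_{2},\mathcal C_{3}$ (the boundary values $p=1$, $q=1,2$, and the $k>3$ caveat) is where most of the case-analysis effort will go; the underlying monotonicity argument itself is short once the moves are set up correctly.
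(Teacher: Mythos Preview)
Your overall strategy for the first claim --- repeatedly apply Lemma~\ref{sp277} to concentrate all ``extra'' structure at a single hub vertex --- is exactly what the paper does, though the paper organizes the case analysis a bit differently: it treats $\mathcal B_{1}$ first (gather pendant edges at a degree-$3$ vertex, then move the two $C_2$-edges at $v_{2,2}$ to $v_{1,2}$ to collapse the path, then shrink $q$ down to $g$ one step at a time), and then separately shows $\mathcal B_{2}\to\mathcal B_{1}$ and $\mathcal B_{3}\to\mathcal B_{2}$ by single edge-moves. Your description of the path-collapsing move (``move the edges incident with $u_l$ onto $u_0$, shortening $P$ by one'') is imprecise --- that move collapses $P$ entirely rather than shortening it --- but the paper's version (move the two $C_2$-edges off $v_{2,2}$) shows the right way to do it while keeping the hypergraph connected and linear.

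For the second claim your route genuinely diverges from the paper's. The paper computes $M\big(B_{1,n,g,0,g}^{k}(m-2g)\big)$ explicitly as a quadratic in $g$, namely $-10g+mk+7m+8+m^{2}+4g^{2}-4mg$, and checks its derivative is negative on $[3,m/2]$. Your idea --- just keep applying the same cycle-shortening move used to reduce $q$ down to $g$, now reducing $g$ down to $3$ --- works equally well and is arguably cleaner, since the hypotheses of Lemma~\ref{sp277} (hub degree $m-2g+4>1$) are immediate and linearity is preserved as long as the shrunk cycle has length $\ge 3$. Either way one lands at $B_{1,n,3,0,3}^{k}(m-6)$.

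One correction: the theorem concerns only $\mathcal B_n^k$, so your remark that ``one should also argue the $\mathcal C_n^k$ family is dominated'' and the closing worry about the $\mathcal C_1,\mathcal C_2,\mathcal C_3$ sub-cases are irrelevant here --- that comparison belongs to the later theorems, not this one.
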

\begin{proof}

Firstly, we consider the hypergraph  in $\mathcal{B}_{1,n,g,l,q}^{k}$. When $l>0$,
repeating the operation of moving edges of Lemma \ref{sp277}, any hypergraph in $\mathcal{B}_{1,n,g,l,q}^{k}$ can be changed into a $k$-uniform bicyclic hypergraph $\mathcal{H}_{1}$
obtained from $B_{1,n',g,l,q}^{k}$ by attaching $m-g-q-l$ pendant edges at a vertex with degree 3.
And each application of Lemma \ref{sp277} strictly increases the Zagreb index. Without loss of generality, let $d_{\mathcal{H}_{1}}(v_{2,2})=3, d_{\mathcal{H}_{1}}(v_{1,2})\geq3$.
Suppose $\mathcal{H}_{2}$ is obtained from $\mathcal{H}_{1}$ by moving 2 edges incident with $v_{2,2}$ in $E(C_{2})$
from vertex $v_{2,2}$ to vertex $v_{1,2}$. By Lemma \ref{sp277}, we have $M(\mathcal{H}_{2})>M(\mathcal{H}_{1})$.
Repeating the operation of moving edges of Lemma \ref{sp277}, $\mathcal{H}_{2}$ be changed into $B_{1,n,g,0,q}^{k}(m-g-q)$.
When $l=0$, repeating the operation of moving edges of Lemma \ref{sp277}, any hypergraph in $\mathcal{B}_{1,n,g,0,q}^{k}$ can be changed into $B_{1,n,g,0,q}^{k}(m-g-q)$.

When $q=g+s$ and $s>0$. The hypergraph
$B_{1,n,g,0,q-1}^{k}(m-g-q+1)$ can be obtained from $B_{1,n,g,0,q}^{k}(m-g-q)$ by moving 1 edges in $E(C_{2})$
from a vertex with degree $2$ adjacent to $v_{1,2}$ to $v_{1,2}$. By Lemma \ref{sp277}, we have $M(B_{1,n,g,0,q-1}^{k}(m-g-q+1))>M(B_{1,n,g,0,q}^{k}(m-g-q))$.
Similarly, we have $M(B_{1,n,g,0,q}^{k}(m-g-q))<\cdots<M(B_{1,n,g,0,q-s+1}^{k}(m-g-q+s-1))<B_{1,n,g,0,g}^{k}(m-2g)$.
Therefore, $B_{1,n,g,0,g}^{k}(m-2g)$ is the hypergraph with maximum Zagreb index in $\{\mathcal{B}_{1,n,g,l,q}^{k}~|~l\geq0,q\geq g\}$.

Secondly, we consider the hypergraph in $\mathcal{B}_{2,n,g,l,q}^{k}$.
Repeating the operation of moving edges of Lemma \ref{sp277}, any hypergraph in $\mathcal{B}_{2,n,g,l,q}^{k}$ can be changed into a $k$-uniform bicyclic hypergraph $\mathcal{H}_{3}$
obtained from $B_{2,n',g,l,q}^{k}$ by attaching $m-g-q-l$ pendant edges at the vertex with degree 3.

When $l>0$, $\mathcal{H}_{4}$ is obtained from $\mathcal{H}_{3}$ by moving $e_{l}$
from vertex $v_{2,1}$ to vertex $v_{2,2}$. By Lemma \ref{sp277}, we have $M(\mathcal{H}_{4})>M(\mathcal{H}_{3})$. Obviously, $\mathcal{H}_{4}\in \mathcal{B}_{1,n,g,l,q}^{k}.$

When $l=0$, $\mathcal{H}_{4}$ is obtained from $\mathcal{H}_{3}$ by moving $m-g-q+2$ edges in $E(\mathcal{H}_{3})-E(C_{2})$
from vertex $v_{2,1}$ to vertex $v_{2,2}$. By Lemma \ref{sp277}, we have $M(\mathcal{H}_{4})>M(\mathcal{H}_{3})$. Obviously, $\mathcal{H}_{4}\in \mathcal{B}_{1,n,g,0,q}^{k}.$
Therefore, $B_{1,n,g,0,g}^{k}(m-2g)$ is the hypergraph with maximum Zagreb index in $\bigcup_{i=1}^{2}\{\mathcal{B}_{i,n,g,l,q}^{k}~|~l\geq0,q\geq g\}$.

Thirdly, we consider the hypergraph in $\mathcal{B}_{3,n,g,l,q}^{k}$. Repeating the operation of moving edges of Lemma \ref{sp277}, any hypergraph in $\mathcal{B}_{3,n,g,l,q}^{k}$ can be changed into a $k$-uniform bicyclic hypergraph $\mathcal{H}_{5}$ obtained from $B_{3,n',g,l,q}^{k}$ by attaching $m-g-q-l$ pendant edges at the vertex with degree 2.

When $l>0$. If $\mathcal{H}_{5}$ is obtained from $B_{3,n',g,l,q}^{k}$ by attaching $m-g-q-l$ pendant edges at the vertex $v_{1,1}$. Let $\mathcal{H}_{6}$
be obtained from $\mathcal{H}_{5}$ by moving $m-g-q-l+1$ edges in $E(\mathcal{H}_{5})-E(C_{1})$
from vertex $v_{1,1}$ to vertex $v_{1,2}$. By Lemma \ref{sp277}, we have $M(\mathcal{H}_{6})>M(\mathcal{H}_{5})$. Obviously, $\mathcal{H}_{6}\in \mathcal{B}_{2,n,g,l,q}^{k}.$
If $\mathcal{H}_{5}$ is obtained from $B_{3,n',g,l,q}^{k}$ by attaching $m-g-q-l$ pendant edges at the vertex except $v_{1,1}$ with degree 2. Let $\mathcal{H}_{6}$
be obtained from $\mathcal{H}_{5}$ by moving $e_{1}$
from vertex $v_{1,1}$ to vertex $v_{1,2}$. By Lemma \ref{sp277}, we have $M(\mathcal{H}_{6})>M(\mathcal{H}_{5})$. Obviously, $\mathcal{H}_{6}\in \mathcal{B}_{2,n,g,l,q}^{k}.$

%

When $l=0$.
If $\mathcal{H}_{5}$ is obtained from $B_{3,n',g,0,q}^{k}$ by attaching $m-g-q$ pendant edges at the vertex $v_{1,1}$. Let $\mathcal{H}_{7}$ be obtained from $\mathcal{H}_{5}$ by moving $m-g-q+1$ edges in $E(\mathcal{H}_{5})-E(C_{1})$
from vertex $v_{1,1}$ to vertex $v_{1,2}$. By Lemma \ref{sp277}, we have $M(\mathcal{H}_{7})>M(\mathcal{H}_{5})$. Obviously, $\mathcal{H}_{7}\in \mathcal{B}_{2,n,g,0,q}^{k}.$
If $\mathcal{H}_{5}$ is obtained from $B_{3,n',g,0,q}^{k}$ by attaching $m-g-q$ pendant edges at the vertex except $v_{1,1}$ with degree 2. Let $\mathcal{H}_{7}$ be obtained from $\mathcal{H}_{5}$ by moving $1$ edge in $E(C_{2})$
from vertex $v_{1,1}$ to vertex $v_{1,2}$. By Lemma \ref{sp277}, we have $M(\mathcal{H}_{7})>M(\mathcal{H}_{5})$. Obviously, $\mathcal{H}_{7}\in \mathcal{B}_{2,n,g,0,q}^{k}.$

Therefore, $B_{1,n,g,0,g}^{k}(m-2g)$ is the hypergraph with maximum Zagreb index in
$\bigcup_{i=1}^{3}\{\mathcal{B}_{i,n,g,l,q}^{k}~|~l\geq0,q\geq g\}$.

For $3\leq g\leq\frac{m}{2}$, we have
\begin{align*}
M(B_{1,n,g,0,g}^{k}(m-2g))&=2g(k-2)+(m-2g)(k-1)+8(g-1)+(m-2g+4)^{2}\\
&=-10g+mk+7m+8+m^{2}+4g^{2}-4mg.
\end{align*}
Let $f(x)=-10x+mk+7m+8+m^{2}+4x^{2}-4mx, x\in[3,\frac{m}{2}].$ Since $\frac{df(x)}{dx}=-10+8x-4m<0$, $f(x)$ is a strictly monotone decreasing function. So $M(B_{1,n,g,0,g}^{k}(m-2g)\leq M(B_{1,n,3,0,3}^{k}(m-6)$ for $3\leq g\leq\frac{m}{2}$ with the equation if and only if $g=3$. Hence, $B_{1,n,3,0,3}^{k}(m-6)$ is the hypergraph with maximum Zagreb index among all hypergraphs in $\mathcal{B}_{n}^{k}=\bigcup_{i=1}^{3}\{\mathcal{B}_{i,n,p,l,q}^{k}~|~q\geq p\geq3,l\geq 0\}$.

\end{proof}

The following Theorem gives the hypergraph with maximum Zagreb index among  all hypergraphs with girth $g$ in $\mathcal{C}_{n}^{k}$, and gives the hypergraph with maximum Zagreb index among all hypergraphs in $\mathcal{C}_{n}^{k}$.
\begin{thm}\label{q1}
For $k\geq3$ and $m\geq \frac{3g}{2}$, when $g$ is even, $C_{1,n,\frac{g}{2},\frac{g}{2},\frac{g}{2}}^{k}(m-\frac{3g}{2})$ is the hypergraph with maximum Zagreb index among all hypergraphs with girth $g$ in $\mathcal{C}_{n}^{k}$.
When $g$ is odd, $C_{2,n,\lfloor\frac{g}{2}\rfloor,\lceil\frac{g}{2}\rceil,\lfloor\frac{g}{2}\rfloor}^{k}(m-g-\lfloor\frac{g}{2}\rfloor)$ is the hypergraph with maximum Zagreb index among all  hypergraphs with girth $g$ in $\mathcal{C}_{n}^{k}$.

For $m\geq 6$, $C_{2,n,1,2,1}^{k}(m-4)$ is the hypergraph with maximum Zagreb index among all hypergraphs in $\mathcal{C}_{n}^{k}$.
\end{thm}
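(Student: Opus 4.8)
The plan is to mimic the proof of the previous theorem: reduce an arbitrary member of $\mathcal{C}_n^k$ to a canonical ``pendant‑concentrated'' form by repeated use of Lemma~\ref{sp277}, and then compare the resulting finitely many explicit hypergraphs. Throughout I would use the identity $M(\mathcal{H})=3km-2n+\sum_{t\ge 3}(t-1)(t-2)n_t$ established above, so that only the vertices of degree at least $3$ enter the comparison.

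First fix the girth $g$ and work in one family $\mathcal{C}_{i,n,p,q,l}^k$ at a time. Since a hypergraph in $\mathcal{C}_{i,n,p,q,l}^k$ is obtained from the core $C_{i,n',p,q,l}^{k}$ by attaching trees, it is bicyclic with the same girth as its core, and repeated applications of Lemma~\ref{sp277} push every attached edge onto a vertex of maximum degree, strictly increasing $M$ at each step; the process terminates at $C_{i,n,p,q,l}^{k}(m-p-q-l)$ for $i\in\{1,2\}$ and at the analogous one‑hub form for $i=3$. I would then record the girth of the core in terms of the path lengths---$p+q$ for $C_1$, $\min\{p+q,q+l,p+l+1\}$ for $C_2$, $\min\{p+q,q+l,p+l+2\}$ for $C_3$---so that ``girth $=g$'' becomes a system of (in)equalities on $(p,q,l)$. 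Because a larger number of attached edges forces a larger maximum degree, hence (via the identity) a larger $M$, the task is to minimise $p+q+l$ subject to the girth constraints and then break ties using the lower‑degree vertices. Carrying this out: for even $g$ the minimum core size $\frac{3g}{2}$ is attained by the balanced theta $C_{1,n,g/2,g/2,g/2}^{k}$; a suitable $C_3$‑configuration attains the same maximum degree but has no additional degree‑$3$ vertex, whereas the $C_1$‑form retains the degree‑$3$ endpoint, so the $C_1$‑form wins by exactly $2$. For odd $g$ no balanced $C_1$ exists, and the optimum is $C_{2,n,\lfloor g/2\rfloor,\lceil g/2\rceil,\lfloor g/2\rfloor}^{k}$, all three of whose cycles have length $g$ and whose core has the minimum size $3\lfloor g/2\rfloor+1$; attaching the $m-g-\lfloor g/2\rfloor$ pendants at its unique degree‑$3$ vertex gives the stated extremal hypergraph. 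One must also verify that no hypergraph of girth $g$ in any family has a strictly larger maximum degree than these, which again follows from the girth constraints.

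Finally I would optimise over $g\ge 3$. Plugging the two extremal degree sequences into $M=3km-2n+\sum_{t\ge 3}(t-1)(t-2)n_t$ expresses $M$ as an explicit (essentially quadratic, parity‑dependent) function of $g$; comparing its values at $g$ and $g+1$ reduces to a linear inequality in $m$ valid for all $m\ge 6$, so $M$ strictly decreases in $g$ on the admissible range. Hence the maximum over $\mathcal{C}_n^k$ occurs at $g=3$; since $3$ is odd with $\lfloor 3/2\rfloor=1$ and $\lceil 3/2\rceil=2$, this extremal hypergraph is $C_{2,n,1,2,1}^{k}(m-4)$.

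The step I expect to be the main obstacle is the fixed‑$g$ comparison across the three families and the two parities of $g$: it requires tracking the entire high‑degree part of the degree sequence rather than just the maximum degree (to resolve near‑ties such as the $C_1$ versus $C_3$ one above), checking that each edge‑move stays within the girth‑$g$ class or passes to a hypergraph of larger $M$, and separately disposing of the degenerate parameter ranges ($p=1$, $q=1$, $q=2$, the condition $k>3$ in the definition of $C_3$, and small $g$).
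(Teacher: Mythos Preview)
Your proposal is correct and follows essentially the same route as the paper: repeated applications of Lemma~\ref{sp277} to concentrate all attached edges at a single high-degree vertex, followed by explicit comparison of the resulting canonical forms and a final monotonicity argument in $g$. The only organisational difference is that you package the within-family reduction as ``minimise the core size $p+q+l$ subject to the girth constraints'' and use the identity $M=3km-2n+\sum_{t\ge 3}(t-1)(t-2)n_t$ to compare only the high-degree vertices, whereas the paper writes out each edge move and each $M$-value explicitly and reduces the $C_2$- and $C_3$-types to $C_1$- (or $C_2$-) types by Lemma~\ref{sp277} rather than optimising in each family separately; both arrive at the same tie-breaking computations (the ``wins by exactly $2$'' for even $g$, and the $3g-1-2m<0$ inequality for odd $g$).
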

\begin{proof}
Firstly, we consider the hypergraph in $\mathcal{C}_{1,n,p,g-p,l}^{k}$.
Repeating the operation of moving edges of Lemma \ref{sp277}, any hypergraph in $\mathcal{C}_{1,n,p,g-p,l}^{k}$ can be changed into a $k$-uniform bicyclic hypergraph $C_{1,n,p,g-p,l}^{k}(m-g-l)$
obtained from $C_{1,n',p,g-p,l}^{k}$ by attaching $m-g-l$ pendant edges at a vertex with degree 3.
And each application of Lemma \ref{sp277} strictly increases the Zagreb index.

If $g$ is even. When $g-p<l$, without loss of generality, let $d_{C_{1,n,p,g-p,l}^{k}(m-g-l)}(u_{1})\geq3$.
The hypergraph
$C_{1,n,p,g-p,l-1}^{k}(m-g-l+1)$ can
be obtained from $C_{1,n,p,g-p,l}^{k}(m-g-l)$ by moving $g_{2}$
from vertex $w_{2}$ to vertex $u_{1}$. By Lemma \ref{sp277}, we have $M(C_{1,n,p,g-p,l-1}^{k}(m-g-l+1))>M(C_{1,n,p,g-p,l}^{k}(m-g-l))$.
Similarly, we get $M(C_{1,n,p,g-p,l}^{k}(m-g-l))<M(C_{1,n,p,g-p,l-1}^{k}(m-g-l+1))<\cdots<M(C_{1,n,p,g-p,g-p}^{k}(m-2g+p)).$

When $p\neq \frac{g}{2}$, we have
\begin{align*}
  &M(C_{1,n,p+1,g-p-1,g-p-1}^{k}(m-2g+p+1))-M(C_{1,n,p,g-p,g-p}^{k}(m-2g+p))\\
  &=(m-2g+p+4)^{2}+1^{2}-(m-2g+p+3)^{2}-2^{2}\\
   &=2(m-2g+p)+4>0.
\end{align*}
Similarly, we get $M(C_{1,n,p,g-p,g-p}^{k}(m-2g+p))<M(C_{1,n,p+1,g-p-1,g-p-1}^{k}(m-2g+p+1))<\cdots<M(C_{1,n,\frac{g}{2},\frac{g}{2},\frac{g}{2}}^{k}(m-\frac{3g}{2}))$. Therefore, when $g$ is even, $C_{1,n,\frac{g}{2},\frac{g}{2},\frac{g}{2}}^{k}(m-\frac{3g}{2})$ has maximum Zagreb index among all hypergraphs in $\{\mathcal{C}_{1,n,p,g-p,l}^{k}~|~ p=1,1<g-p\leq l \text{~or~} 1<p\leq g-p\leq l\}$.

If $g$ is odd, similarly, we get $C_{1,n,\lfloor\frac{g}{2}\rfloor,\lceil\frac{g}{2}\rceil,\lceil\frac{g}{2}\rceil}^{k}(m-g-\lceil\frac{g}{2}\rceil)$ has maximum Zagreb index among all hypergraphs  in $\{\mathcal{C}_{1,n,p,g-p,l}^{k}~|~ p=1,1<g-p\leq l \text{~or~} 1<p\leq g-p\leq l\}$.

Secondly, for $p+q=g$, we consider the hypergraph in $\mathcal{C}_{2,n,p,q,l}^{k}$. Repeating the operation of moving edges of Lemma \ref{sp277}, any hypergraph in $\mathcal{C}_{2,n,p,q,l}^{k}$ can be changed into a $k$-uniform bicyclic hypergraph $\mathcal{H}_{1}$
obtained from $C_{2,n',p,q,l}^{k}$ by attaching $m-p-q-l$ pendant edges at the vertex with degree 3.

If $q$ of $\mathcal{H}_{1}$ is equal to $1$, then the girth is $p+1$.
$\mathcal{H}_{2}$ is obtained from $\mathcal{H}_{1}$ by moving $g_{l}$
from vertex $v$ to vertex $v_{2}$. Obviously, $\mathcal{H}_{2}\in \mathcal{C}_{1,n,1,p,l}^{k}$ and $g(\mathcal{H}_{2})=p+1$. By Lemma \ref{sp277}, we have $ M(\mathcal{H}_{2})>M(\mathcal{H}_{1})$.

If $q$ of $\mathcal{H}_{1}$ is greater than 1, then $1\leq p\leq q-1\leq l$.

When $1\leq p\leq q-1=l$.
If $1\leq p<q-1=l$, $\mathcal{H}_{2}'$ is obtained from $\mathcal{H}_{1}$ by moving $g_{l}$
from vertex $v$ to vertex $v_{q}$. Obviously, $\mathcal{H}_{2}'\in \mathcal{C}_{1,n,p+1,q-1,l}^{k}$ and $g(\mathcal{H}_{2}')=p+q$. By Lemma \ref{sp277}, we have $ M(\mathcal{H}_{2}')>M(\mathcal{H}_{1})$.

If $1\leq p=q-1=l$, then $g(\mathcal{H}_{1})=2l+1$. Obviously, the girth of $\mathcal{H}_{1}$ is odd. And $\mathcal{H}_{1}=C_{2,n,l,l+1,l}^{k}(m-3l-1)=C_{2,n,\lfloor\frac{g}{2}\rfloor,\lceil\frac{g}{2}\rceil,\lfloor\frac{g}{2}\rfloor}^{k}(m-g-\lfloor\frac{g}{2}\rfloor)$. We have
\begin{align*}
&M(C_{2,n,\lfloor\frac{g}{2}\rfloor,\lceil\frac{g}{2}\rceil,\lfloor\frac{g}{2}\rfloor}^{k}(m-g-\lfloor\frac{g}{2}\rfloor))\\
&=(g+\frac{g-1}{2}-1)(k-2)+(k-3)+(m-g-\frac{g-1}{2})(k-1)+4(g+\frac{g-1}{2}-1)\\
&+(3+m-g-\frac{g-1}{2})^{2}\\
&=-6g+\frac{23}{4}+mk+6m+m^{2}+\frac{9g^{2}}{4}-3mg.
\end{align*}

When the girth is odd, $C_{1,n,\lfloor\frac{g}{2}\rfloor,\lceil\frac{g}{2}\rceil,\lceil\frac{g}{2}\rceil}^{k}(m-g-\lceil\frac{g}{2}\rceil)$ has maximum Zagreb index among all  hypergraphs  in $\{\mathcal{C}_{1,n,p,g-p,l}^{k}~|~ p=1,1<g-p\leq l \text{~or~} 1<p\leq g-p\leq l\}$. We have
\begin{align*}
&M(C_{1,n,\lfloor\frac{g}{2}\rfloor,\lceil\frac{g}{2}\rceil,\lceil\frac{g}{2}\rceil}^{k}(m-g-\lceil\frac{g}{2}\rceil))\\
&=(g+\frac{g+1}{2})(k-2)+(m-g-\frac{g+1}{2})(k-1)+4(g+\frac{g+1}{2}-3)+9+(3+m-g-\frac{g+1}{2})^{2}\\
&=-3g+\frac{19}{4}+mk+4m+m^{2}+\frac{9}{4}g^{2}-3mg.
\end{align*}
Hence, $M(C_{1,n,\lfloor\frac{g}{2}\rfloor,\lceil\frac{g}{2}\rceil,\lceil\frac{g}{2}\rceil}^{k}(m-g-\lceil\frac{g}{2}\rceil))-M(C_{2,n,\lfloor\frac{g}{2}\rfloor,\lceil\frac{g}{2}\rceil,\lfloor\frac{g}{2}\rfloor}^{k}(m-g-\lfloor\frac{g}{2}\rfloor))=3g-1-2m<0$.

When $1\leq p\leq q-1<l$,
$\mathcal{H}_{2}''$ is obtained from $\mathcal{H}_{1}$ by moving $g_{l}$
from vertex $v$ to vertex $v_{q+1}$. Obviously, $\mathcal{H}_{2}''\in \mathcal{C}_{1,n,p,g-p,l}^{k}$ and $g(\mathcal{H}_{2}'')=p+q$. By Lemma \ref{sp277}, we have $ M(\mathcal{H}_{2}'')>M(\mathcal{H}_{1})$.


Therefore,
when $g$ is even, $C_{1,n,\frac{g}{2},\frac{g}{2},\frac{g}{2}}^{k}(m-\frac{3g}{2})$ is the hypergraph with maximum Zagreb index among all hypergraphs with girth $g$ in $\{\mathcal{C}_{1,n,p,q,l}^{k} ~|~ p=1,1<q\leq l \text{~or~} 1<p\leq q\leq l\} \bigcup \{\mathcal{C}_{2,n,p,q,l}^{k}~|~ q=1,1<p\leq l \text{~or~} q>1, 1\leq p\leq q-1\leq l\}$. When $g$ is odd, $C_{2,n,\lfloor\frac{g}{2}\rfloor,\lceil\frac{g}{2}\rceil,\lfloor\frac{g}{2}\rfloor}^{k}(m-g-\lfloor\frac{g}{2}\rfloor)$ is the hypergraph with maximum Zagreb index among all hypergraphs with girth $g$ in $\{\mathcal{C}_{1,n,p,q,l}^{k} ~|~ p=1,1<q\leq l \text{~or~} 1<p\leq q\leq l\} \bigcup \{\mathcal{C}_{2,n,p,q,l}^{k}~|~ q=1,1<p\leq l \text{~or~} q>1, 1\leq p\leq q-1\leq l\} $.

Thirdly, for $p+q=g$, we consider the hypergraph in $\mathcal{C}_{3,n,p,q,l}^{k}$. Repeating the operation of moving edges of Lemma \ref{sp277}, any hypergraph in $\mathcal{C}_{3,n,p,q,l}^{k}$ can be changed into a $k$-uniform bicyclic hypergraph $\mathcal{H}_{3}$
obtained from $C_{3,n',p,q,l}^{k}$ by attaching $m-p-q-l$ pendant edges at a vertex with degree 2.

If $q$ of $\mathcal{H}_{3}$ is equal to 1, then $g(\mathcal{H}_{3})=p+1$.
If $\mathcal{H}_{3}$ is obtained from $C_{3,n',p,1,l}^{k}$ by attaching $m-p-1-l$ pendant edges at the vertex $v'$ (or $v''$). Let $\mathcal{H}_{4}$ be obtained from $\mathcal{H}_{3}$ by moving $m-p-1-l$ pendant edges from vertex $v'$ to vertex $v_{1}$ (or from vertex $v''$ to vertex $v_{2}$), moving $g_{1}$
from vertex $v'$ to vertex $v_{1}$ and moving $g_{l}$
from vertex $v''$ to vertex $v_{2}$. Obviously, $\mathcal{H}_{4}\in \mathcal{C}_{1,n,1,p,l}^{k}$ and $g(\mathcal{H}_{4})=p+1$.
By Lemma \ref{sp277}, we have $M(\mathcal{H}_{4})>M(\mathcal{H}_{3})$.
If $\mathcal{H}_{3}$ is obtained from $C_{3,n',p,1,l}^{k}$ by attaching $m-p-1-l$ pendant edges at the vertex except $v',v''$ with degree 2.  Let $\mathcal{H}_{4}$ be obtained from $\mathcal{H}_{3}$ by moving $g_{1}$ from vertex $v'$ to vertex $v_{1}$ and moving $g_{l}$
from vertex $v''$ to vertex $v_{2}$. Obviously, $\mathcal{H}_{4}\in \mathcal{C}_{1,n,1,p,l}^{k}$ and $g(\mathcal{H}_{4})=p+1$. By Lemma \ref{sp277}, we have $M(\mathcal{H}_{4})>M(\mathcal{H}_{3})$.

If $q=2, p=l=1$ of $\mathcal{H}_{3}$, then $g(\mathcal{H}_{3})=3$.
If $\mathcal{H}_{3}$ is obtained from $C_{3,n',1,2,1}^{k}$ by attaching $m-4$ pendant edges at the vertex $v'$. Let $\mathcal{H}_{5}$ be obtained from $\mathcal{H}_{3}$ by moving $g_{1}$ and $m-4$ pendant edges from vertex $v'$ to vertex $v_{1}$. Obviously, $\mathcal{H}_{5}\in \mathcal{C}_{2,n,1,2,1}^{k}$ and $g(\mathcal{H}_{5})=3$. By Lemma \ref{sp277}, we have $M(\mathcal{H}_{5})>M(\mathcal{H}_{3})$.
If $\mathcal{H}_{3}$ is obtained from $C_{3,n',1,2,1}^{k}$ by attaching $m-4$ pendant edges at the vertex except $v'$ with degree 2.  Let $\mathcal{H}_{5}$ be obtained from $\mathcal{H}_{3}$ by moving $g_{1}$
from vertex $v'$ to vertex $v_{1}$. Obviously, $\mathcal{H}_{5}\in \mathcal{C}_{2,n,1,2,1}^{k}$ and $g(\mathcal{H}_{5})=3$. By Lemma \ref{sp277}, we have $M(\mathcal{H}_{5})>M(\mathcal{H}_{3})$.

If $q=2, 1=p<l$ (or $q=2, 1<p\leq l$) of $\mathcal{H}_{3}$, then $g(\mathcal{H}_{3})=3$ (or $p+2$). Similar to the proof of $q=1$ of $\mathcal{H}_{3}$, $\mathcal{H}_{3}$ can be changed into $\mathcal{H}_{4}$,
$\mathcal{H}_{4}\in \mathcal{C}_{1,n,1,2,l}^{k}$ (or $\mathcal{C}_{1,n,2,p,l}^{k}$), $g(\mathcal{H}_{4})=3$ (or $p+2$) and $M(\mathcal{H}_{4})>M(\mathcal{H}_{3})$.

If $q>2$ of $\mathcal{H}_{3}$, then $1\leq p\leq q-2\leq l$.
When $q\geq 2$, $M(\mathcal{H}_{3})=-p-q+2-l+mk+3m+m^{2}+p^{2}+q^{2}+l^{2}-2mp-2mq-2ml+2pq+2pl+2ql.$

When $1\leq p\leq q-2=l$, if $1\leq p=q-2=l$, then $M(\mathcal{H}_{3})=9l+mk-m+9l^{2}-6ml+m^{2}+4$ and $g(\mathcal{H}_{3})=2l+2.$

When $g$ is even, $C_{1,n,\frac{g}{2},\frac{g}{2},\frac{g}{2}}^{k}(m-\frac{3g}{2})$ is the hypergraph with maximum Zagreb index among all hypergraphs with girth $g$ in $\{\mathcal{C}_{1,n,p,q,l}^{k} ~|~ p=1,1<q\leq l \text{~or~} 1<p\leq q\leq l\} \bigcup \{\mathcal{C}_{2,n,p,q,l}^{k}~|~ q=1,1<p\leq l \text{~or~} q>1, 1\leq p\leq q-1\leq l\} $.
We have
\begin{align*}
M(C_{1,n,\frac{g}{2},\frac{g}{2},\frac{g}{2}}^{k}(m-\frac{3g}{2}))&=\frac{3}{2}g(k-2)+(m-\frac{3}{2}g)(k-1)+12(\frac{g}{2}-1)+9+(3+m-\frac{3}{2}g)^{2}\\
&=-\frac{9}{2}g+mk+5m+6+m^{2}+\frac{9}{4}g^{2}-3mg.
\end{align*}
Hence, $M(C_{1,n,l+1,l+1,l+1}^{k}(m-3l-3))=9l+mk-m+6+m^{2}+9l^{2}-6ml.$ Therefore, $M(C_{1,n,l+1,l+1,l+1}^{k}(m-3l-3))-M(\mathcal{H}_{3})=2>0$.

If $1\leq p<q-2=l$,
supppose $\mathcal{H}_{3}$ is obtained from $C_{3,n',p,q,l}^{k}$ by attaching $m-p-q-l$ pendant edges at the vertex $v'$. Let $\mathcal{H}_{6}$ be obtained from $\mathcal{H}_{3}$ by moving $g_{1}$ and $m-p-q-l$ pendant edges from vertex $v'$ to vertex $v_{2}$. Obviously, $\mathcal{H}_{6}\in \mathcal{C}_{2,n,p+1,q-1,l}^{k}$ and $g(\mathcal{H}_{6})=p+q$. By Lemma \ref{sp277}, we have $M(\mathcal{H}_{6})>M(\mathcal{H}_{3})$.
If $\mathcal{H}_{3}$ is obtained from $C_{3,n',p,q,l}^{k}$ by attaching $m-p-q-l$ pendant edges at the vertex except $v'$ with degree 2.  Let $\mathcal{H}_{6}$ be obtained from $\mathcal{H}_{3}$ by moving $g_{1}$
from vertex $v'$ to vertex $v_{2}$. Obviously, $\mathcal{H}_{6}\in \mathcal{C}_{2,n,p+1,q-1,l}^{k}$ and $g(\mathcal{H}_{6})=p+q$. By Lemma \ref{sp277}, we have $M(\mathcal{H}_{6})>M(\mathcal{H}_{3})$.

When $1\leq p\leq q-2<l$, similar to the proof of $q=2, p=l=1$ of $\mathcal{H}_{3}$, $\mathcal{H}_{3}$ can be changed into $\mathcal{H}_{5}$,
$\mathcal{H}_{5}\in \mathcal{C}_{2,n,p,q,l}^{k}$, $g(\mathcal{H}_{5})=p+q$ and $M(\mathcal{H}_{5})>M(\mathcal{H}_{3})$.



Therefore,
when $g$ is even, $C_{1,n,\frac{g}{2},\frac{g}{2},\frac{g}{2}}^{k}(m-\frac{3g}{2})$ is the hypergraph with maximum Zagreb index among all hypergraphs with girth $g$ in $\mathcal{C}_{n}^{k}$.
When $g$ is odd, $C_{2,n,\lfloor\frac{g}{2}\rfloor,\lceil\frac{g}{2}\rceil,\lfloor\frac{g}{2}\rfloor}^{k}(m-g-\lfloor\frac{g}{2}\rfloor)$ is the hypergraph with maximum Zagreb index among all hypergraphs with girth $g$ in $\mathcal{C}_{n}^{k}$.

If $\mathcal{H} \in \{\mathcal{C}_{1,n,p,q,l}^{k} ~|~ p=1,1<q\leq l \text{~or~} 1<p\leq q\leq l\}$.
Since $g=p+q$ and $p \leq q$, $g-q \leq q$, that is $q\geq \frac{g}{2}$. Since $l\geq q$, we have $l\geq \frac{g}{2}$.
Then $p+q+l=g+l\geq \frac{3}{2}g$. Thus,
when $m\geq \frac{3}{2}g$, $\mathcal{H}$ exists.

If $\mathcal{H} \in  \{\mathcal{C}_{2,n,p,q,l}^{k}~|~ q=1,1<p\leq l \text{~or~} q>1, 1\leq p\leq q-1\leq l\}$.
Since $g=p+q$,
we have $p+q\leq p+l+1$ and $p+q\leq q+l$,
which implies $l\geq \frac{g-1}{2}$.
Then we have $p+q+l\geq g+\frac{g-1}{2}=\frac{3}{2}g-\frac{1}{2}$.
Thus, when
$m\geq \frac{3}{2}g-\frac{1}{2}$, $\mathcal{H}$ exists.
For $\frac{3g}{2}-\frac{1}{2}\leq m<\frac{3g}{2}$, we have $m=\frac{3g}{2}-\frac{1}{2}$ and $g$ is odd.
If $q=1$, $\mathcal{H}$ does not exist.
If $q>1$, $1\leq g-q\leq q-1\leq l$, then $\frac{g}{2}+\frac{1}{2}\leq q\leq l+1.$  Since $m=\frac{3g}{2}-\frac{1}{2}$, $l\geq \frac{g-1}{2}$ and $\frac{g}{2}+\frac{1}{2}\leq q\leq l+1$, $l=\frac{g-1}{2}$ and $q=\frac{g}{2}+\frac{1}{2}$.
Therefore, $\mathcal{H}=C_{2,n,\lfloor\frac{g}{2}\rfloor,\lceil\frac{g}{2}\rceil,\lfloor\frac{g}{2}\rfloor}^{k}$ and $g$ is odd. 

If $\mathcal{H} \in \{\mathcal{C}_{3,n,p,q,l}^{k}~|~ q>2, 1\leq p\leq q-2\leq l\text{~or~} q=2,1\leq p\leq l  \text {~or~}q=1, k>3,1<p\leq l\}$.
Since $g=p+q$,
we have $p+q\leq p+l+2$ and $p+q\leq q+l$,
which implies $l\geq \frac{1}{2}g-1$.
Then we have $p+q+l\geq g+\frac{1}{2}g-1=\frac{3}{2}g-1$.
Thus,
when $m\geq \frac{3}{2}g-1$, $\mathcal{H}$ exists.
If $q>2$, $1\leq g-q\leq q-2\leq l$, then $\frac{g}{2}+1\leq q \leq l+2.$
For $\frac{3g}{2}-1\leq m<\frac{3g}{2}$, when $g$ is even, $m=\frac{3g}{2}-1$.
Since $m=\frac{3g}{2}-1$, $l\geq \frac{1}{2}g-1$ and $\frac{g}{2}+1\leq q \leq l+2$, $l=\frac{1}{2}g-1$ and $q=\frac{g}{2}+1$.
Therefore, $\mathcal{H}=C_{3,n,\frac{g}{2}-1,\frac{g}{2}+1,\frac{g}{2}-1}^{k}$ and $g$ is even.
When $g$ is odd, $m=\frac{3g}{2}-\frac{1}{2}$. Since $m=\frac{3g}{2}-\frac{1}{2}$, $l\geq \frac{1}{2}g-1$ and $\frac{g}{2}+1\leq q \leq l+2$, $l=\frac{1}{2}g-\frac{1}{2}$ and $q=\frac{1}{2}g+\frac{3}{2}$. Therefore, $\mathcal{H}=C_{3,n,\frac{1}{2}g-\frac{3}{2},\frac{1}{2}g+\frac{3}{2},\frac{1}{2}g-\frac{1}{2}}^{k}$ and $g$ is odd.
If $q=1$ or $2$, for $\frac{3g}{2}-1\leq m<\frac{3g}{2}$ and $m\geq 6$, $\mathcal{H}$ does not exist.

When $g$ is even, we have
$M(C_{1,n,\frac{g}{2},\frac{g}{2},\frac{g}{2}}^{k}(m-\frac{3g}{2}))=-\frac{9}{2}g+mk+5m+6+m^{2}+\frac{9}{4}g^{2}-3mg.$
Let $f(x)=-\frac{9}{2}x+mk+5m+6+m^{2}+\frac{9}{4}x^{2}-3mx, 4\leq x\leq \frac{2m}{3}.$ Since $\frac{df(x)}{dx}=-\frac{9}{2}+\frac{9x}{2}-3m<0$, $f(x)$ is a strictly monotone decreasing function. So when $g$ is even, $M(C_{1,n,\frac{g}{2},\frac{g}{2},\frac{g}{2}}^{k}(m-\frac{3g}{2}))\leq M(C_{1,n,2,2,2}^{k}(m-6))$ for $4\leq g\leq \frac{2m}{3}$ with the equation if and only if $g=4$. Since the maximum degree of $ C_{3,n,\frac{g}{2}-1,\frac{g}{2}+1,\frac{g}{2}-1}^{k}$ is 2, $M(C_{1,n,2,2,2}^{k}(m-6))> M(C_{3,n,\frac{g}{2}-1,\frac{g}{2}+1,\frac{g}{2}-1}^{k})$ for $g=\frac{2m}{3}+\frac{2}{3}.$ 

When $g$ is odd, we have $M(C_{2,n,\lfloor\frac{g}{2}\rfloor,\lceil\frac{g}{2}\rceil,\lfloor\frac{g}{2}\rfloor}^{k}(m-g-\lfloor\frac{g}{2}\rfloor))=-6g+\frac{23}{4}+mk+6m+m^{2}+\frac{9g^{2}}{4}-3mg$.
Let $f(x)=-6x+\frac{23}{4}+mk+6m+m^{2}+\frac{9x^{2}}{4}-3mx, 3\leq x\leq \frac{2m}{3}+\frac{1}{3}.$ Since $\frac{df(x)}{dx}=-6+\frac{9x}{2}-3m<0$, $f(x)$ is a strictly monotone decreasing function. So
when $g$ is odd, $M(C_{2,n,\lfloor\frac{g}{2}\rfloor,\lceil\frac{g}{2}\rceil,\lfloor\frac{g}{2}\rfloor}^{k}(m-g-\lfloor\frac{g}{2}\rfloor))\leq M(C_{2,n,1,2,1}^{k}(m-4))$ for $3\leq g\leq \frac{2m}{3}$ with the equation if and only if $g=3$. And $M(C_{2,n,1,2,1}^{k}(m-4))>M(C_{2,n,\lfloor\frac{g}{2}\rfloor,\lceil\frac{g}{2}\rceil,\lfloor\frac{g}{2}\rfloor}^{k})$ for $g=\frac{2m}{3}+\frac{1}{3}.$
Since the maximum degree of $C_{3,n,\frac{1}{2}g-\frac{3}{2},\frac{1}{2}g+\frac{3}{2},\frac{1}{2}g-\frac{1}{2}}^{k}$ is 2, $M(C_{1,n,2,2,2}^{k}(m-6))> M(C_{3,n,\frac{1}{2}g-\frac{3}{2},\frac{1}{2}g+\frac{3}{2},\frac{1}{2}g-\frac{1}{2}}^{k})$ for $g=\frac{2m}{3}+\frac{1}{3}.$

When $m\geq6$,
since $M(C_{1,n,2,2,2}^{k}(m-6))-M(C_{2,n,1,2,1}^{k}(m-4))=16-4m<0$, $C_{2,n,1,2,1}^{k}(m-4)$ is the hypergraph with maximum Zagreb index among all hypergraphs in $\mathcal{C}_{n}^{k}$.
\end{proof}

The following Theorem gives the hypergraph with maximum Zagreb index among all linear bicyclic $k$-uniform hypergraphs with $n$ vertices, $m$ edges and girth $g$. And gives the hypergraph with maximum Zagreb index among all linear bicyclic $k$-uniform hypergraphs with $n$ vertices and $m$ edges.
\begin{thm}
For $k\geq3$ and $m\geq 2g$, when $g$ is even, $C_{1,n,\frac{g}{2},\frac{g}{2},\frac{g}{2}}^{k}(m-\frac{3g}{2})$ is the hypergraph with maximum Zagreb index among all linear bicyclic $k$-uniform hypergraphs with $n$ vertices, $m$ edges and girth $g$.
When $g$ is odd, $C_{2,n,\lfloor\frac{g}{2}\rfloor,\lceil\frac{g}{2}\rceil,\lfloor\frac{g}{2}\rfloor}^{k}(m-g-\lfloor\frac{g}{2}\rfloor)$ is the hypergraph with maximum Zagreb index among all linear bicyclic $k$-uniform hypergraphs with $n$ vertices, $m$ edges and girth $g$.

For $m\geq 6$, $C_{2,n,1,2,1}^{k}(m-4)$ is the hypergraph with maximum Zagreb index among all linear bicyclic $k$-uniform hypergraphs with $n$ vertices and $m$ edges.
\end{thm}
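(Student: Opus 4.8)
The plan is to deduce this theorem from Theorems~3.3 and~3.4 together with the structural dichotomy recorded in the Preliminaries: every linear bicyclic $k$-uniform hypergraph with $n$ vertices and $m$ edges lies in $\mathcal{B}_n^k$ or in $\mathcal{C}_n^k$. Hence the maximizer of the Zagreb index among \emph{all} linear bicyclic $k$-uniform hypergraphs (with or without a prescribed girth) is whichever of the two subfamily champions has the larger value, and both champions are already identified. So the only genuinely new content is a head-to-head comparison of the $\mathcal{B}$-champion against the $\mathcal{C}$-champion.

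First I would handle the girth-constrained statement. Fix $g\ge3$ and assume $m\ge 2g$; since $2g>\frac{3g}{2}$ the hypothesis of Theorem~3.4 holds, and $m\ge 2g$ is exactly the condition for a girth-$g$ member of $\mathcal{B}_n^k$ to exist. By Theorem~3.3 the maximum of $M$ over the girth-$g$ members of $\mathcal{B}_n^k$ is attained at $B_{1,n,g,0,g}^k(m-2g)$, and by Theorem~3.4 the maximum over the girth-$g$ members of $\mathcal{C}_n^k$ is attained at $C_{1,n,g/2,g/2,g/2}^k(m-\frac{3g}{2})$ when $g$ is even and at $C_{2,n,\lfloor g/2\rfloor,\lceil g/2\rceil,\lfloor g/2\rfloor}^k(m-g-\lfloor g/2\rfloor)$ when $g$ is odd. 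Using the explicit polynomial values of $M$ already computed in the proofs of Theorems~3.3 and~3.4, a short computation gives
\[
M\bigl(C_{1,n,g/2,g/2,g/2}^k(m-\tfrac{3g}{2})\bigr)-M\bigl(B_{1,n,g,0,g}^k(m-2g)\bigr)=m(g-2)+\frac{11}{2}g-\frac{7}{4}g^2-2
\]
for even $g$, and
\[
M\bigl(C_{2,n,\lfloor g/2\rfloor,\lceil g/2\rceil,\lfloor g/2\rfloor}^k(m-g-\lfloor g/2\rfloor)\bigr)-M\bigl(B_{1,n,g,0,g}^k(m-2g)\bigr)=m(g-1)+4g-\frac{7}{4}g^2-\frac{9}{4}
\]
for odd $g$. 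Substituting $m\ge 2g$ bounds the right-hand sides below by $\frac{1}{4}g^2+\frac{3}{2}g-2$ (for even $g\ge4$) and by $\frac{1}{4}g^2+2g-\frac{9}{4}$ (for odd $g\ge3$), which are positive; hence in each parity the $\mathcal{C}$-champion strictly beats the $\mathcal{B}$-champion, and combined with Theorems~3.3 and~3.4 this yields the first two assertions.

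For the unconstrained statement, assume $m\ge6$. Theorem~3.3 gives that the maximum of $M$ over $\mathcal{B}_n^k$ is attained at $B_{1,n,3,0,3}^k(m-6)$, and Theorem~3.4 gives that the maximum over $\mathcal{C}_n^k$ is attained at $C_{2,n,1,2,1}^k(m-4)$, which is exactly the odd-girth champion above specialized to $g=3$. Evaluating the two formulas at $g=3$ and subtracting gives $M(C_{2,n,1,2,1}^k(m-4))-M(B_{1,n,3,0,3}^k(m-6))=2m-6>0$ for $m\ge6$, so $C_{2,n,1,2,1}^k(m-4)$ is the overall maximizer, and the inequality being strict gives uniqueness.

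The calculations are routine degree-sequence bookkeeping and elementary quadratic estimates; the crux is getting the parameter dictionary right. Concretely, I expect the main care to be needed in verifying that the girth-$g$ subfamily of $\mathcal{B}$-type hypergraphs really is $\bigcup_{i=1}^3\{\mathcal{B}_{i,n,g,l,q}^k\mid q\ge g,\ l\ge0\}$ (so that Theorem~3.3 applies to precisely this set), that this set is nonempty exactly when $m\ge 2g$, and that the odd-girth $\mathcal{C}$-champion degenerates to $C_{2,n,1,2,1}^k(m-4)$ at $g=3$ so that the girth-free comparison is consistent with the per-girth one. Once this is in place, nothing beyond Theorems~3.3, 3.4 and the stated dichotomy is required, since those theorems already exhaust all linear bicyclic hypergraphs and no new extremal configuration can appear.
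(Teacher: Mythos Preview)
Your proposal is correct and follows essentially the same approach as the paper: reduce to the $\mathcal{B}$- and $\mathcal{C}$-champions via the earlier theorems, compute the same polynomial differences $M(\text{$\mathcal{C}$-champion})-M(B_{1,n,g,0,g}^{k}(m-2g))$, and verify positivity on the relevant range of $g$. The only cosmetic difference is that the paper checks positivity by locating the two roots of the quadratic in $g$ and showing $[3,\tfrac{m}{2}]$ (resp.\ $[4,\tfrac{m}{2}]$) lies between them, whereas you substitute the linear lower bound $m\ge 2g$ directly; both arguments are equivalent and your final $2m-6>0$ comparison for the unconstrained case is exactly the $g=3$ instance of the paper's odd-$g$ inequality.
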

\begin{proof}
When $g$ is even,
$M(C_{1,n,\frac{g}{2},\frac{g}{2},\frac{g}{2}}^{k}(m-\frac{3g}{2}))-M(B_{1,n,g,0,g}^{k}(m-2g))=\frac{11}{2}g-2m-2-\frac{7}{4}g^{2}+mg$, $4\leq g\leq \frac{m}{2}$.
Let $f(x)=\frac{11}{2}x-2m-2-\frac{7}{4}x^{2}+mx$. The roots of $f(x)$ are easily obtained as $x_{1}=\frac{11+2m-2\sqrt{(m-\frac{3}{2})^{2}+14}}{7}$ and $x_{2}=\frac{11+2m+2\sqrt{(m-\frac{3}{2})^{2}+14}}{7}$. Since
$x_{1}<\frac{11+2m-2(m-\frac{3}{2})}{7}=2<4$ and $x_{2}>\frac{11+2m+2(m-\frac{3}{2})}{7}=\frac{4m+8}{7}>\frac{m}{2}$, when $4\leq g\leq \frac{m}{2}$, $M(C_{1,n,\frac{g}{2},\frac{g}{2},\frac{g}{2}}^{k}(m-\frac{3g}{2}))-M(B_{1,n,g,0,g}^{k}(m-2g))>0.$
Therefore, when $g$ is even, $C_{1,n,\frac{g}{2},\frac{g}{2},\frac{g}{2}}^{k}(m-\frac{3g}{2})$ is the hypergraph with maximum Zagreb index among all linear bicyclic $k$-uniform hypergraphs with $n$ vertices, $m$ edges and girth $g$.

When $g$ is odd,
$M(C_{2,n,\lfloor\frac{g}{2}\rfloor,\lceil\frac{g}{2}\rceil,\lfloor\frac{g}{2}\rfloor}^{k}(m-g-\lfloor\frac{g}{2}\rfloor))-M(B_{1,n,g,0,g}^{k}(m-2g))=4g-m-\frac{9}{4}-\frac{7}{4}g^{2}+mg, 3\leq g\leq \frac{m}{2}$.
Let $f(x)=4x-m-\frac{9}{4}-\frac{7}{4}x^{2}+mx$. 
The roots of $f(x)$ are easily obtained as $x_{1}=\frac{8+2m-2\sqrt{(m+\frac{1}{2})^{2}}}{7}=1$ and $x_{2}=\frac{8+2m+2\sqrt{(m+\frac{1}{2})^{2}}}{7}=\frac{4m+9}{7}$. Obviously,
$x_{1}<3, x_{2}>\frac{m}{2}.$ So, when $3\leq g\leq \frac{m}{2}$, $M(C_{2,n,\lfloor\frac{g}{2}\rfloor,\lceil\frac{g}{2}\rceil,\lfloor\frac{g}{2}\rfloor}^{k}(m-g-\lfloor\frac{g}{2}\rfloor))-M(B_{1,n,g,0,g}^{k}(m-2g))>0.$
Hence, when $g$ is odd, $C_{2,n,\lfloor\frac{g}{2}\rfloor,\lceil\frac{g}{2}\rceil,\lfloor\frac{g}{2}\rfloor}^{k}(m-g-\lfloor\frac{g}{2}\rfloor)$ is the hypergraph with maximum Zagreb index among all linear bicyclic $k$-uniform hypergraphs with $n$ vertices, $m$ edges and girth $g$.

From the proof of Theorem \ref{q1}, 
$C_{2,n,1,2,1}^{k}(m-4)$ is the hypergraph with maximum Zagreb index among all linear bicyclic $k$-uniform hypergraphs with $n$ vertices and $m$ edges.
\end{proof}

\vspace{3mm}

\noindent
\textbf{Acknowledgments}
\vspace{3mm}
\noindent

This work is supported by the National Natural Science Foundation of China (No. 12071097, No. 12371344), the Natural Science Foundation for The Excellent Youth Scholars of the Heilongjiang Province (No. YQ2022A002) and the Fundamental Research Funds for the
Central Universities.

\section*{References}
\bibliographystyle{unsrt}
\bibliography{pbib}
\end{spacing}
\end{document}